\newtheorem{theorem}{Theorem}[section]
\newtheorem{lemma}[theorem]{Lemma}
\newtheorem{remark}[theorem]{Remark}
\newtheorem{proposition}[theorem]{Proposition}
\newtheorem{corollary}[theorem]{Corollary}
\newtheorem{definition}[theorem]{Definition}
\newtheorem{example}[theorem]{Example}
\newcommand\be{\begin{equation}}
\newcommand\ee{\end{equation}}
\newcommand\bn{\begin{eqnarray}}
\newcommand\en{\end{eqnarray}}
\newcommand\bns{\begin{eqnarray*}}
\newcommand\ens{\end{eqnarray*}}
\newcommand\bd{\begin{definition}}
\newcommand\ed{\end{definition}}
\newcommand\br{\begin{remark}}
\newcommand\er{\end{remark}}
\newcommand\bt{\begin{theorem}}
\newcommand\et{\end{theorem}}
\newcommand\bp{\begin{proposition}}
\newcommand\ep{\end{proposition}}
\newcommand\bc{\begin{corollary}}
\newcommand\ec{\end{corollary}}
\newcommand\bl{\begin{lemma}}
\newcommand\el{\end{lemma}}
\newcommand\pf{\begin{proof}}
\newcommand\bN{{\mathbb N}}
\newcommand{\F}{\mbox{$\mathcal{F}$}}
\begin{document}

\title{Total Positivity of Quasi-Riordan Arrays}

\author[1]{Tian-Xiao He}
\author[2]{Roksana S\l{}owik}
\affil[1]{Department of Mathematics, Illinois Wesleyan University, Bloomington, IL 61702-2900, Indiana, USA}
\affil[2]{Faculty of Applied Mathematics, Silesian University of Technology, Kaszubska 23, 44-100 Gliwice, Poland}

\pagestyle{myheadings} 
\markboth{T. X. He and R. S\l{}owik}{TP Quasi-Riordan Arrays}

\maketitle

\begin{abstract}
\noindent 
In this paper the total positivity of quasi-Riordan arrays is investigated with use of the sequence characterization of quasi-Riordan arrays. Due to the correlation between quasi-Riordan arrays and Riordan arrays, this study is an in-depth discussion of the total positivity of Riordan arrays.

\vskip .2in \noindent AMS Subject Classification: 05A15, 05A05, 15B36, 15A06, 05A19, 11B83.

\vskip .2in \noindent \textbf{Key Words and Phrases:} Riordan array, Riordan group, Quasi-Riordan array, Almost-Riordan array, Quasi-Riordan group.
\end{abstract}

\section{Introduction}\label{Sec1}

Following Karlin \cite{Kar} and Pinkus \cite{Pin}, an infinite matrix is called totally positive (abbreviately, TP), if its minors of all orders are nonnegative. An infinite sequence of nonnegative real numbers  $(a_n)_{n\geq 0}$ is called a P\'olya frequency sequence (abbreviately, PF), if its Toeplitz matrix

\[
\left[a_{i-j}\right]_{i,j\geq 0}=\left[ \begin{array} {lllll} a_0 & & & &  \\
a_1& a_0 & & & \\ a_2 &a_1& a_0& &  \\ a_3& a_2 & a_1& a_0 & \\
\vdots& \vdots &\vdots&\vdots & \ddots \end{array}\right]
\]

\noindent
is TP. We say that a finite sequence $(a_0, a_1,\ldots, a_n)$ is PF if the corresponding infinite sequence $(a_0, a_1, \ldots, a_n, 0, \ldots)$ is PF. Denote by ${\bN}$ the set of all nonnegative integers. A fundamental characterization for PF sequences is given by Schoenberg et al.\cite{AESW, ASW} (see also Karlin \cite{Kar}), which states that a sequence $(a_n)_{n\geq 0}$ is PF if and only if its generating function can be written as 

\begin{equation}\label{0}
\sum_{n\geq 0} a_n t^n=C t^ke^{\gamma t}\frac{\Pi_{j\geq 0} (1+\alpha_j t)}{\Pi_{j\geq 0} (1-\beta_j t)},
\end{equation}
where $C>0$, $k\in {\bN}$, $\alpha_j$, $\beta_j$, $\gamma \geq 0$, and $\sum_{j\geq 0}(\alpha_j+\beta_j)<\infty$. In this case, the above generating function is called a P\'olya frequency formal power series. For more preliminary materials and more relevant results see, for example, Brenti \cite{Bre}, Mao, Mu, and Wang \cite{MMW} and Pinkus \cite{Pin}.

Let $A$ and $B$ be $m\times m$ and $n\times n$ matrices, respectively. Then we define the direct sum
of $A$ and $B$ by

\begin{equation}\label{2.5} 
A\oplus B =\left[ \begin{matrix} A &0 \\ 0 &B\end{matrix}\right]_{(m+n)\times(m+n)}.
\end{equation}

Let us recall some basic definitions and introduce the notation.

First, the Riordan arrays (or Riordan matrices) are infinite, lower triangular matrices defined by the generating function of their columns. With the matrix multiplication, the set of all Riordan arrays form a group, called {\em the Riordan group} (see Shapiro, Getu, Woan and Woodson \cite{SGWW}). 

More formally, let us consider the set of formal power series ring ${\mathcal F} = {{\mathbb K}}[\![$$t$$]\!]$, where ${{\mathbb K}}$ is the field ${{\mathbb R}}$ or ${{\mathbb C}}$. The \emph{order} of $f(t)  \in {\mathcal F}$, $f(t) =\sum_{k=0}^\infty f_kt^k$ ($f_k\in {{\mathbb K}}$), is the minimum number $r\in{{\mathbb N}}$ such that $f_r \neq 0$, where ${{\mathbb N}}=\{0,1,2\ldots\}$ is the set of all natural numbers. We denote by ${\mathcal F}_r$ the set of formal power series of order $r$. Let $g(t) \in {\mathcal F}_0$ and $f(t) \in {\mathcal F}_1$; the pair $(g,\,f )$ defines the {\em (proper) Riordan array} $R=(d_{n,k})_{n,k\in {{\mathbb N}}}=(g, f)$ having
  
\begin{equation}\label{Radef}
d_{n,k} = [t^n]g(t) f(t)^k
\end{equation}
or, in other words, having $g f^k$ as the generating function of the $k$th column of $(g,f)$.  Hence,
\begin{equation}\label{2.6-0}
(g,f):=(g,gf,gf^2,gf^3,\ldots),
\end{equation}
where $g$, $gf$, $gf^2$, $gf^3\cdots$ are the generating functions of the $0$th, $1$st, $2$nd, $3$rd, $\cdots$, columns of the matrix $(g,f)$, respectively. The {\it first fundamental theorem of Riordan arrays} can be represented as 

\[
(g(t), f(t)) h(t)=g(t) (h\circ f)(t),
\]
which can also be simplified to $(g,f)h=gh(f)$. Thus we immediately see that the usual row-by-column product of two Riordan arrays is also a Riordan array:
\begin{equation}\label{Proddef}
    (g_1,\,f_1 )  (g_2,\,f_2 ) = (g_1 g_2(f_1),\,f_2(f_1)).
\end{equation}
The Riordan array $I = (1,\,t)$ is the identity matrix because its entries are $d_{n,k} = [t^n]t^k=\delta_{n,k}$.

Let $(g\left( t\right),\,f(t)) $ be a Riordan array. Then its inverse is

\begin{equation}
(g\left( t\right),\,f(t))^{-1}=\left( \frac{1}{g(\overline{{f}}(t))},
\overline{f}(t),\right)  \label{Invdef}
\end{equation}%
where $\overline {f}(t)$ is the compositional inverse of $f(t)$, i.e., $(f\circ 
\overline{f})(t)=(\overline{f}\circ f)(t)=t$. In this way, the set $\mathcal{
R}$ of all proper Riordan arrays forms a group (see \cite{SGWW}) called the Riordan group.

Here is a list of six important subgroups of the Riordan group (see \cite{SGWW, Sha, Barry}).

\begin{itemize}
\item the {\it Appell subgroup} $\{ (g(t),\,t):g\in {\mathcal F}_0\}$.
\item the {\it Lagrange (associated) subgroup} $\{(1,\,f(t):  f\in{\mathcal F}_1)\}$.
\item the {\it $k$-Bell subgroup} $\{(g(t),\, t(g(t))^k):g\in {\mathcal F}_0\}$, where $k$ is a fixed positive integer. 
\item the {\it hitting-time subgroup} $\{(tf'(t)/f(t),\, f(t)): f\in {\mathcal F}_1\}$.
\item the {\it derivative subgroup} $\{ (f'(t), \, f(t)):f\in {\mathcal F}_1\}$.
\item the {\it checkerboard subgroup} $\{ (g(t),\, f(t)):g\in {\mathcal F}_0, f\in {\mathcal F}_1\},$ where $g$ is an even function and $f$ is an odd function. 
\end{itemize}

The $1$-Bell subgroup is referred to as the Bell subgroup for short, and the Appell subgroup can be considered as the $0$-Bell subgroup if we allow $k=0$ to be included in the definition of the $k$-Bell subgroup.

An infinite lower triangular matrix $[d_{n,k}]_{n,k\in{{{\mathbb N}}}}$ is a Riordan array if and only if a unique sequence $A=(a_0\not= 0, a_1, a_2,\ldots)$ exists such that for every $n,k\in{{{\mathbb N}}}$  
\begin{equation}\label{eq:1.1}
d_{n+1,k+1} =a_0 d_{n,k}+a_1d_{n,k+1}+\cdots +a_nd_{n,n}. 
\end{equation} 
This is equivalent to 
\begin{equation}\label{eq:1.2}
f(t)=tA(f(t))\quad \text{or}\quad t=\bar f(t) A(t).
\end{equation}
Here, $A(t)$ is the generating function of the $A$-sequence. The first formula of \eqref{eq:1.2} is also called the {\it second fundamental theorem of Riordan arrays}.

Moreover, there exists a unique sequence $Z=(z_0, z_1,z_2,\ldots)$ such that every element in column $0$ can be expressed as the linear combination 
\begin{equation}\label{eq:1.3}
d_{n+1,0}=z_0 d_{n,0}+z_1d_{n,1}+\cdots +z_n d_{n,n},
\end{equation}
or equivalently,
\begin{equation}\label{eq:1.4}
g(t)=\frac{1}{1-tZ(f(t))},
\end{equation}
in which and thoroughly we always assume $g(0)=g_0=1$, a usual hypothesis for proper Riordan arrays. From \eqref{eq:1.4}, we may obtain 

\[
Z(t)=\frac{g(\bar f(t))-1}{\bar f(t)g(\bar f(t))}.
\]

$A$- and $Z$-sequence characterizations of Riordan arrays were introduced, developed, and/or studied in Merlini, Rogers, Sprugnoli, and Verri \cite{MRSV}, Roger \cite{Rog}, Sprugnoli and the author \cite{HS}, Jean-Louis and Nkwanta \cite{JLN}, Luz\'on, Mor\'on, Prieto-Martinez \cite{LMPM17, LMPM}, and \cite{He15} as well as their references.

Secondly, we recall the definition of quasi-Riordan arrays given in \cite{He}.

\begin{definition}\label{def:2.3}\cite{He} 
Let $g\in \F_0$ with $g(0)=1$ and $f\in \F_1$. We call the following matrix a quasi-Riordan array and denote it by $[g,f]$.
\begin{equation}\label{2.6}
[g,f]:=(g,f,tf,t^2f,\ldots),
\end{equation}
where $g$, $f$, $tf$, $t^2f\cdots$ are the generating functions of the $0$th, $1$st, $2$nd, $3$rd, $\cdots$, columns of the matrix $[g,f]$, respectively. It is clear that $[g,f]$ can be written as 
\begin{equation}\label{2.8}
[g,f]=\left( \begin{matrix} g(0) & 0\\ (g-g(0))/t & (f/t,t)\end{matrix}\right),
\end{equation}
where $(f,t)=(f,tf,t^2f,t^3f,\ldots)$ and $(f/t, t)$ is an Appell Riordan array. Particularly, if $f=tg$, then the quasi-Riordan array $[g,tg]=(g, t)$,the Appell-type Riordan array. 
\end{definition}

The relationship between the Riordan arrays and quasi-Riordan arrays can be presented in the following theorem.

\begin{theorem}\label{thm:2.4}\cite{He}
Let $(g,f)=(d_{n,k})_{n,k\geq 0}$ be a Riordan array, and let $([1]\oplus (g,f))$ and $[g,f]$ be defined by \eqref{2.5} and \eqref{2.6}, respectively. Then $(g,f)$ has the expression
\begin{equation}\label{2.9}
(g,f)=[g,f]([1]\oplus (g,f)).
\end{equation}
\end{theorem}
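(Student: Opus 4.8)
The plan is to verify \eqref{2.9} column by column using generating functions. The key tool is the elementary fact that the $m$th column of a matrix product $PQ$ is the linear combination $\sum_k Q_{k,m}\,(\text{$k$th column of }P)$; passing to generating functions, this reads $\mathrm{col}_m(PQ)=\sum_k Q_{k,m}\,\mathrm{col}_k(P)$, where $\mathrm{col}_k(M)$ denotes the generating function of the $k$th column of $M$. Since all matrices involved are lower triangular, every such sum is finite and the manipulation is legitimate.

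First I would record the column generating functions of all three arrays, taking $P:=[g,f]$ and $Q:=[1]\oplus(g,f)$. By \eqref{2.6-0} we have $\mathrm{col}_m((g,f))=gf^{m}$. By \eqref{2.6}, $P$ has $\mathrm{col}_0(P)=g$ and $\mathrm{col}_k(P)=t^{k-1}f$ for $k\ge1$. For $Q$, the direct-sum definition \eqref{2.5} gives $\mathrm{col}_0(Q)=1$, while for $m\ge1$ the $m$th column of $Q$ is the $(m-1)$th column of $(g,f)$ shifted down by one row; hence $Q_{0,m}=0$ and $Q_{k,m}=d_{k-1,m-1}$ for $k\ge1$, so that $\mathrm{col}_m(Q)=t\,gf^{m-1}$.

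Next I would substitute these into the column formula and split on $m$. For $m=0$ only the term $Q_{0,0}=1$ survives, giving $\mathrm{col}_0(PQ)=g=\mathrm{col}_0((g,f))$. For $m\ge1$ the term $k=0$ drops out because $Q_{0,m}=0$, and the remaining sum reindexes as
\[
\mathrm{col}_m(PQ)=\sum_{k\ge1}d_{k-1,m-1}\,t^{k-1}f=f\sum_{j\ge0}d_{j,m-1}t^{j}=f\cdot gf^{m-1}=gf^{m},
\]
which equals $\mathrm{col}_m((g,f))$. As every column generating function agrees, the two matrices coincide, proving \eqref{2.9}.

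The computation carries no analytic difficulty; the single point demanding care is the one-row shift built into $[1]\oplus(g,f)$, which both forces the separate treatment of column $0$ and supplies exactly the factor $t$ that, against the factor $t^{k-1}$ in the columns of $[g,f]$, reindexes $\sum_{k\ge1}d_{k-1,m-1}t^{k-1}$ into $gf^{m-1}$. Equivalently, and perhaps more cleanly, one may argue through the block form \eqref{2.8}: since $g(0)=1$, write $[g,f]=\left(\begin{smallmatrix}1&0\\(g-1)/t&(f/t,t)\end{smallmatrix}\right)$ and multiply blockwise by $\left(\begin{smallmatrix}1&0\\0&(g,f)\end{smallmatrix}\right)$; the only nontrivial block is the Riordan product $(f/t,t)(g,f)=(gf/t,f)$ obtained from \eqref{Proddef}, and one checks directly that the resulting array reproduces $(g,f)$ entrywise.
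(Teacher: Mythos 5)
Your proof is correct. Note that the paper states this theorem as a quoted result from \cite{He} and supplies no proof of its own, so there is no in-paper argument to compare against; your column-by-column computation with generating functions is a complete, self-contained verification, and the block-form variant you sketch at the end (multiplying \eqref{2.8} by $[1]\oplus(g,f)$ and using \eqref{Proddef} to get $(f/t,t)(g,f)=(gf/t,f)$, which matches the corresponding block decomposition of $(g,f)$ since $g(0)=1$) is the argument most directly suggested by the paper's own formula \eqref{2.8}. The only point worth tightening is the phrase that ``every such sum is finite'': the sum over $k$ in $\mathrm{col}_m(PQ)=\sum_k Q_{k,m}\,\mathrm{col}_k(P)$ is formally infinite, but it is locally finite because $\mathrm{col}_k([g,f])=t^{k-1}f$ has order at least $k$, so the coefficient of $t^n$ receives contributions only from $k\le n$, which is all that is needed for the formal manipulation to be valid.
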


For an integer $r\geq 0$, denote the $r$-th truncation of a formal power series $h=\sum_{n\geq 0} h_n t^n$ by $h|_r:=\sum^r_{n=0} h_nt^n$. For an integer $n\geq 0$, denote by $(g, f)_n$ the $n$-th order leading principle submatrix of the Riordan array $(g(t),f(t))_n$. This notation is introduced by Luz\'on et al in \cite{LMMPMS} and to the one used later by Luz\'on and her collaborators. Similarly, $[g,f]_n$ denotes the $n$-th order leading principle submatrix of the quasi-Riordan array $[g,f]$ defined by \eqref{2.6}, namely 
\begin{equation}\label{2.12}
[g,f]_n=\left( \begin{matrix} g(0) & 0\\ ((g-g(0))/t)|_{n-1} & (f,t)_{n-1}\end{matrix}\right),
\end{equation}
where the $n-1$-st truncation of $(g-g(0)/t)|_{n-1}$ is 
\[(g-g(0)/t)|_{n-1}=\sum^{n-1}_{k=1}g_kt^{k-1},\] 
and $(f,t)_{n-1}=(f, tf, t^2f, \ldots, t^{n-1} f).$ We call $[g,f]_n$ the $n$-th truncation of the quasi-Riordan array $[g,f]$.

\begin{corollary}\label{cor:2.5}\cite{He, MMW}
Let $(g,f)=(d_{n,k})_{n,k\geq 0}$ be a Riordan array with $g=\sum_{n\geq 0} g_n t^n$ and $f=\sum_{n\geq 1} f_n t^n$, and let $(g,f)_n$ be the $n$-th order leading principle submatrix of $(g,f)$. Then we have the recurrence relation of $(g,f)_n$ in the following form:
\begin{equation}\label{2.11}
(g,f)_n=[g,f]_n([1]\oplus (g,f)_{n-1}).
\end{equation}
\end{corollary}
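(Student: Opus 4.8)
The plan is to obtain the recurrence \eqref{2.11} as the leading $n\times n$ principal truncation of the infinite matrix identity \eqref{2.9} of Theorem~\ref{thm:2.4}. Throughout, write $P_n(M)$ for the leading $n\times n$ principal submatrix of an infinite matrix $M$, so that by definition $(g,f)_n=P_n((g,f))$ and $[g,f]_n=P_n([g,f])$. The task then reduces to applying $P_n$ to both sides of $(g,f)=[g,f]\bigl([1]\oplus(g,f)\bigr)$ and checking that the right-hand side truncates to $[g,f]_n\bigl([1]\oplus(g,f)_{n-1}\bigr)$.

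First I would isolate the one algebraic fact that makes this work: truncation is multiplicative on a product whose left factor is lower triangular. Indeed, if $A$ is lower triangular and $B$ is arbitrary, then for $i,j<n$ we have $(AB)_{i,j}=\sum_{l\ge 0}A_{i,l}B_{l,j}$, and since $A_{i,l}=0$ whenever $l>i$, this sum runs only over $0\le l\le i$ and is therefore finite. As $i<n$, every surviving index satisfies $l<n$, so only entries inside the leading $n\times n$ blocks of $A$ and of $B$ contribute, giving $P_n(AB)=P_n(A)\,P_n(B)$. The matrix $[g,f]$ is lower triangular by \eqref{2.6}, since its $k$th column is generated by $t^{k-1}f$ (for $k\ge 1$) and $f\in\F_1$ forces this series to have order $k$, so the first nonzero entry of column $k$ sits in row $k$. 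Thus the fact applies with $A=[g,f]$ and $B=[1]\oplus(g,f)$.

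Next I would pin down the three truncations. The identifications $P_n((g,f))=(g,f)_n$ and $P_n([g,f])=[g,f]_n$ are immediate from the definitions, the latter being precisely \eqref{2.12}. For the remaining factor, the direct sum $[1]\oplus(g,f)$ places the entry $1$ at position $(0,0)$ and shifts the entire array $(g,f)$ down by one row and right by one column; its leading $n\times n$ window therefore captures the $1$ together with only the leading $(n-1)\times(n-1)$ corner of $(g,f)$, so that $P_n\bigl([1]\oplus(g,f)\bigr)=[1]\oplus(g,f)_{n-1}$. Applying $P_n$ to \eqref{2.9} and substituting these three identities yields
\[
(g,f)_n=P_n\bigl([g,f]\bigr)\,P_n\bigl([1]\oplus(g,f)\bigr)=[g,f]_n\bigl([1]\oplus(g,f)_{n-1}\bigr),
\]
which is exactly \eqref{2.11}.

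I expect the only genuinely delicate point to be the index shift rather than any computation: the recurrence relates the $n$-th truncation of $(g,f)$ to the $(n-1)$-st truncation, and this asymmetry is dictated entirely by the block structure of $[1]\oplus(g,f)$, whose leading $n\times n$ block reaches only the $(n-1)\times(n-1)$ corner of $(g,f)$. The care required is thus in matching block sizes when the multiplicativity of $P_n$ is invoked; once the three truncations are aligned as above, the corollary follows with no further work.
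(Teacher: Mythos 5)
Your proof is correct. The paper itself supplies no proof of this corollary (it is quoted from \cite{He, MMW}), but your derivation is the natural one: truncate the infinite identity \eqref{2.9} of Theorem~\ref{thm:2.4}, using that $P_n(AB)=P_n(A)P_n(B)$ whenever the left factor $A$ is lower triangular, and observe that the leading block of $[1]\oplus(g,f)$ only reaches the $(n-1)$-st truncation of $(g,f)$, which accounts for the index shift in \eqref{2.11}. All three truncation identifications you use are justified, and the lower triangularity of $[g,f]$ follows correctly from $f\in\F_1$, so nothing further is needed.
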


Let $R=(g(t), f(t))$ be a Riordan array, where $g(t)=\sum_{n\geq 0} g_nt^n$ and $f(t)=\sum_{n\geq 1}f_nt^n$. If the lower triangular matrix 

\begin{equation}\label{eq:1}
Q=\left [ \begin{array}{llllll} g_0& 0& 0& 0& 0& \cdots\\
g_1& f_1 & 0& 0& 0&\cdots\\
g_2 &f_2& f_1& 0& 0& \cdots\\
g_3& f_3& f_2& f_1&0&\cdots\\
\vdots &\vdots& \vdots& \vdots&\vdots&\ddots\end{array}\right]=\left( g(t), f(t), tf(t), t^{2}f(t),\ldots\right)
\end{equation}
is totally positive (TP), then so is $R$ \cite{MMW}. This can be proved easily by using Corollary \ref{cor:2.5} and mathematical induction. 

The question is when $Q$ is totally positive. It is obvious that the P\'olya frequency property for the formal power series $g$, $f$ is not sufficient to ensure TP of $Q$.

\begin{example}
\label{ex:gf_not_TP}
Let $g(t)=(1+t)^2$ and $f(t)=t/(1-t)$. Then both $g(t)$ and $f(t)$ are P\'olya frequency, and $(g, f)$ is TP. However, the minor of $[g,f]$ 

\[
M^{1,2,3}_{0,1,2}=\det\left( \begin{array}{lll} 2&1&0\\1& 1&1\\0&1&1\end{array}\right)=-1<0.
\]
Hence, $[g,f]$ is not TP although $(g,f)$ is TP. 
\end{example}

In this paper, we search for the conditions of TP quasi-Riordan arrays and the sequence characterization of TP quasi-Riordan arrays. From the last example, we know that the formal power series $g(t)$ plays an important role in identifying whether a quasi-Riordan array is TP. Hence, in the next section, we discuss the conditions and their equivalents for TP quasi-Riordan arrays $[g,f]$ with some special power series $g$. In Section \ref{Sec3}, we study the sequence characterization of TP quasi-Riordan arrays. In particular, we present some generalizations of results given in \cite{CLW}. More specifically, Theorems \ref{pro:general} and \ref{prop:minors} in this section are generalizations of Theorem 2.1 and Theorem 2.3 of \cite{CLW} for quasi-Riordan arrays. In Section \ref{Sec4} we will present some more examples with various $g$'s for TP and non-TP quasi-Riordan arrays $[g,f]$, respectively. 
From a list of examples showing in this section and the results in the previous sections, one may see a kind of independence of a TP quasi-Riordan array $[g,f]$ and the properties of that its formal power series $g$ and $f$ are the P\'olya frequency formal power series. 

\section{TP Quasi-Riordan Arrays}
\label{Sec2}

Let us consider two special types of Riordan arrays and discuss their TP property and the TP property for the corresponding quasi-Riordan array. 

If $g(t)=1$, then from \eqref{2.8} we have 

\begin{equation}\label{2.0}
[g,f]=\left( \begin{matrix} 1 & 0\\ 0& (f/t,t)\end{matrix}\right). 
\end{equation}
Hence, 

\begin{proposition}
\label{pro:Toeplitz_case} 
Let $f\in\mathcal F_1$.
The following conditions are equivalent.
\begin{enumerate}
\item $f$ is a P\'olya frequency sequence.
\item The Riordan array $(f/t,t)$ is TP.
\item The Riordan array $(1,f)$ is TP.
\item The quasi-Riordan array $\left[1,\frac{f(t)}t\right]$ is TP.
\end{enumerate}
\end{proposition}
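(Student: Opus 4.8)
The plan is to treat the total positivity of the Appell array $(f/t,t)$ as the hub and to verify $(1)\Leftrightarrow(2)$, $(2)\Leftrightarrow(4)$, $(2)\Rightarrow(3)$ and $(3)\Rightarrow(2)$. The starting observation is that $(f/t,t)$ is nothing but the lower-triangular Toeplitz matrix of the series $f/t=\sum_{m\ge 0}f_{m+1}t^m$, since its $(n,k)$ entry equals $[t^n](f/t)\,t^k=f_{n-k+1}$. With this identification, $(2)$ says precisely that $f/t$ is a P\'olya frequency series, so $(1)\Leftrightarrow(2)$ reduces to the claim that $f$ is PF iff $f/t$ is PF. First I would note that, because $f\in\mathcal F_1$, the Schoenberg--Edrei form \eqref{0} of a PF generating function is stable under multiplication or division by $t$ (it only shifts the exponent $k$ by one), so $f$ and $f/t$ are PF simultaneously; equivalently, at the matrix level the Toeplitz matrix of $f$ has an identically zero $0$th row which, once deleted, leaves exactly $(f/t,t)$, and a zero row contributes only vanishing minors.

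For $(2)\Leftrightarrow(4)$ I would use \eqref{2.0}, which presents the quasi-Riordan array of item $(4)$ as the block sum $[1]\oplus(f/t,t)$. A direct expansion of minors shows that $[1]\oplus M$ is TP iff $M$ is TP: a minor meeting the first row but not the first column (or conversely) vanishes, a minor meeting both reduces by one Laplace step to a minor of $M$, and every other minor is already a minor of $M$. For $(2)\Rightarrow(3)$ I would invoke the implication recorded before the proposition (``$Q$ TP $\Rightarrow R$ TP''): specialising Corollary~\ref{cor:2.5} to $g=1$ gives $(1,f)_n=[1,f]_n\big([1]\oplus(1,f)_{n-1}\big)$ with $[1,f]_n=[1]\oplus(f/t,t)_{n-1}$ totally positive by $(2)$, so an induction on $n$ combined with the Cauchy--Binet formula (a product of TP matrices is TP) makes every leading principal submatrix $(1,f)_n$, hence $(1,f)$ itself, totally positive.

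The one substantive implication is $(3)\Rightarrow(2)$, recovering the P\'olya frequency property of $f$ from the total positivity of $(1,f)$, and this is where I expect the main difficulty. The obstruction is structural: the columns of $(1,f)$ are the \emph{composition} powers $1,f,f^2,\dots$, not the shifts $f/t,f,tf,\dots$ of a single series, so the Toeplitz minors of $f/t$ that certify PF are not literal minors of $(1,f)$ -- a failure of log-concavity of $(f_n)$ typically first surfaces in a minor built from the two distinct columns $f$ and $f^2$. One also cannot route this through an $A$-sequence/production-matrix nonnegativity criterion: already for $f=t+t^2$ the array $(1,f)$ is TP while the corresponding $A$-sequence has negative entries. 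I would therefore attack $(3)\Rightarrow(2)$ by the contrapositive, exploiting the factorization $(f/t,f)=(f/t,t)(1,f)$ from \eqref{Proddef}, in which $(f/t,f)$ is the submatrix of $(1,f)$ obtained by deleting the $0$th row and column and is hence TP under $(3)$. Expanding a minor of this product by Cauchy--Binet and using that $(1,f)$ is lower triangular with positive principal minors $\det(1,f)[J,J]=f_1^{\,j_1+\cdots+j_p}>0$ yields, for each pair of index sets, a relation expressing the nonnegative quantity $\det(f/t,f)[I,J]$ as $f_1^{\,j_1+\cdots+j_p}\det(f/t,t)[I,J]$ plus correction terms in which the Toeplitz minors $\det(f/t,t)[I,K]$ with $K$ strictly larger than $J$ are weighted by nonnegative minors of $(1,f)$. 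The plan is to organise these relations into an induction over the index sets that forces $\det(f/t,t)[I,J]\ge 0$; controlling the signs of the correction terms in this triangular system is the delicate point, and it is the step I expect to carry the weight of the proof, very possibly drawing on the sequence-characterization results developed in Section~\ref{Sec3} and in \cite{MMW}.
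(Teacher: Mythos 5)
Your treatment of three of the four equivalences is correct and, in fact, more explicit than the paper's own proof, which consists of the single sentence that the proposition follows ``from the definition of the TP matrices.'' The chain $(1)\Leftrightarrow(2)\Leftrightarrow(4)$ and the implication $(2)\Rightarrow(3)$ go essentially as you say: $(f/t,t)$ is the Toeplitz matrix of $f/t$, so $(1)\Leftrightarrow(2)$ is just the definition of PF; item (4) reduces to item (2) by inspecting the block structure (one small correction: $[1,f/t]$ read literally from \eqref{2.6} has columns $1,\,f/t,\,f,\,tf,\dots$, so it is the Toeplitz matrix $(f/t,t)$ bordered by the extra column $e_0$, with $f_1$ sitting in position $(0,1)$ --- it is not the direct sum $[1]\oplus(f/t,t)$ that \eqref{2.0} produces for $[1,f]$, since \eqref{2.0} presupposes a second argument of order one; your minor-expansion argument nevertheless works verbatim for this bordered form); and $(2)\Rightarrow(3)$ is exactly the induction via Corollary \ref{cor:2.5} and Cauchy--Binet that the paper records after \eqref{eq:1}.

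The genuine gap is the one you flag yourself: $(3)\Rightarrow(2)$ is nowhere established. Passing to the submatrix $(f/t,f)$ of $(1,f)$ and writing $(f/t,f)=(f/t,t)(1,f)$ does not let you recover the signs of the Toeplitz minors $\det(f/t,t)[I,J]$: isolating them from the Cauchy--Binet expansion amounts to multiplying on the right by $(1,f)^{-1}=(1,\bar f)$, whose entries are not sign-controlled, and you give no mechanism forcing the ``correction terms'' in your triangular system to have a favourable sign. So as written the proposal proves $(1)\Leftrightarrow(2)\Leftrightarrow(4)\Rightarrow(3)$ but not the full equivalence. You should be aware that the paper is no better off here: its one-line proof does not address this direction at all. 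The contrast with Proposition \ref{pro:necces_cond} is instructive --- TP of the quasi-Riordan array $[g,f]$ immediately yields PF of $f$ because $(f/t,t)$ sits inside $[g,f]$ as a literal submatrix, whereas the columns of $(1,f)$ are the composition powers $f^k$ and contain no such submatrix, which is precisely the obstruction you identified. Either this direction needs a genuine argument (or an explicit citation), or the statement should be read as the three-way equivalence of (1), (2), (4) together with the one-way implication into (3).
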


\begin{proof} Since $(f/t,t)$ is a Riordan array, from the definition of the TP matrices, we immediately obtain Proposition 
\ref{pro:Toeplitz_case}.
\end{proof}

Consider the case when $g(t)=1+g_1t$ and $f(t)=f_1t+f_2t^2$. Then 

\[
[g,f]=\left[\begin{array}{llll} 1& 0 & & \\ g_1 & f_1 &0 & \\ & f_2& f_1 & \\ & & f_2 &\\  &   \ddots &\ddots &\ddots\end{array}
\right].
\]
Hence, from the proof of Proposition 2.6 of \cite{CLW} we have 

\begin{corollary}
The quasi-Riordan array $[1+g_1t, f_1t+f_2t^2]$ is TP if and only if $g_1,f_1,f_2\geqslant 0$.
\end{corollary}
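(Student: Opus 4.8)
The plan is to exploit the fact that the array $[1+g_1t,\,f_1t+f_2t^2]$ is \emph{bidiagonal}: its only nonzero entries lie on the main diagonal $(1,f_1,f_1,\ldots)$ and on the first subdiagonal $(g_1,f_2,f_2,\ldots)$. The necessity of $g_1,f_1,f_2\geq 0$ is immediate, since these three numbers all occur as entries of the matrix and every entry is a minor of order one; if $[g,f]$ is TP, then each such $1\times 1$ minor is nonnegative. So the substance lies in the converse: assuming $g_1,f_1,f_2\geq 0$, I would show that every minor of every order is nonnegative.

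First I would isolate the Appell block. By \eqref{2.8} the array has the block form whose bottom-right corner is $(f/t,t)=(f_1+f_2t,\,t)$, the Toeplitz matrix of the sequence $(f_1,f_2,0,\ldots)$. Since $f=f_1t+f_2t^2$ is a P\'olya frequency formal power series of the form \eqref{0} whenever $f_1,f_2\geq 0$, Proposition \ref{pro:Toeplitz_case} shows that $(f/t,t)$ is TP. The only features of $[g,f]$ not captured by this block are the isolated $1$ in position $(0,0)$ and the extra entry $g_1$ in position $(1,0)$.

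The core of the argument is then a case analysis on an arbitrary square submatrix indexed by rows $R=\{r_1<\cdots<r_p\}$ and columns $C=\{c_1<\cdots<c_p\}$, organized by whether row $0$ and column $0$ are selected. If $0\notin C$, then either $0\in R$, which forces a zero first row and a vanishing minor, or $0\notin R$, in which case the submatrix is literally a submatrix of the TP Toeplitz block and so has nonnegative determinant. If $0\in C$ and $0\in R$, expanding along the first row (which meets $C$ only in the $(0,0)$ entry $1$) reduces the minor to a minor of the Toeplitz block. If $0\in C$ but $0\notin R$, then column $0$ of the submatrix vanishes unless $r_1=1$, in which case its only nonzero entry is $g_1$ in the top-left corner, and expanding along that column factors the minor as $g_1$ times a minor of the Toeplitz block. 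In every case the minor is a nonnegative multiple of a minor of a TP matrix, hence nonnegative, so $[1+g_1t,\,f_1t+f_2t^2]$ is TP.

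I expect the main obstacle to be the bookkeeping in this case analysis rather than any deep difficulty: one must check that the cofactor expansions genuinely strip off row $0$ and column $0$, so that what remains is a bona fide minor of the Appell block $(f/t,t)$ with rows and columns correctly re-indexed by subtracting $1$, and one must separately handle the degenerate situations $f_1=0$ or $f_2=0$, where the Schoenberg form \eqref{0} must be read with the appropriate $Ct^k$ factor. This is precisely the reduction carried out for Proposition 2.6 of \cite{CLW}, which is why the claim follows from its proof.
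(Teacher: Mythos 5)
Your proof is correct and takes essentially the same approach as the paper: the paper simply displays the bidiagonal form of $[1+g_1t,\,f_1t+f_2t^2]$ and invokes the proof of Proposition 2.6 of \cite{CLW}, which is precisely the reduction to the nonnegative bidiagonal/Toeplitz block that you carry out explicitly. Your case analysis just supplies the details the paper delegates to that citation.
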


Proposition \ref{pro:Toeplitz_case} can be somewhat generalized to the following.

\begin{proposition}
\label{pro:necces_cond}
If $[g,f]$ is $TP$, then $f$ is a P\'olya frequency sequence. 
\end{proposition}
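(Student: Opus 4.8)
The plan is to exhibit the Appell Riordan array $(f/t,t)$ as a corner submatrix of $[g,f]$ and then combine the hereditary nature of total positivity with Proposition~\ref{pro:Toeplitz_case}. The starting point is the block decomposition \eqref{2.8}, which displays $[g,f]$ with first row $(g(0),0,0,\ldots)$, first column $(g(0),g_1,g_2,\ldots)^{T}$, and the array $(f/t,t)$ occupying precisely the block obtained by deleting row $0$ and column $0$. Writing $[g,f]=(m_{i,j})_{i,j\geq 0}$, this means $m_{i+1,j+1}=\bigl((f/t,t)\bigr)_{i,j}$ for all $i,j\geq 0$.

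First I would record the elementary but decisive observation that total positivity is inherited by this corner submatrix. Indeed, for any row indices $\{a_1<\cdots<a_p\}$ and column indices $\{b_1<\cdots<b_p\}$ of $(f/t,t)$, the associated minor coincides, via the shift $m_{i+1,j+1}=\bigl((f/t,t)\bigr)_{i,j}$, with the minor of $[g,f]$ taken on rows $\{a_1+1<\cdots<a_p+1\}$ and columns $\{b_1+1<\cdots<b_p+1\}$. Since $[g,f]$ is assumed TP, every such minor is nonnegative; hence every minor of $(f/t,t)$ is nonnegative, i.e.\ $(f/t,t)$ is TP.

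Finally, I would apply the equivalence of conditions (1) and (2) in Proposition~\ref{pro:Toeplitz_case}: the total positivity of $(f/t,t)$ forces $f$ to be a P\'olya frequency sequence, which is exactly the assertion of the proposition.

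There is no deep obstacle here; the entire content lies in correctly matching the minors of $(f/t,t)$ with those of $[g,f]$ under the index shift $(i,j)\mapsto(i+1,j+1)$. The single point that must be verified with care is that no minor of $(f/t,t)$ secretly draws on the deleted row $0$ or column $0$ of $[g,f]$. This is immediate because $(f/t,t)$ fills the contiguous lower-right corner of $[g,f]$, so all of its row and column indices, when viewed inside $[g,f]$, are at least $1$, leaving the first row and first column entirely untouched.
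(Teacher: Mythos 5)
Your proof is correct and is exactly the argument the paper leaves implicit: the proposition appears there without a written proof, as a direct consequence of the block form \eqref{2.8} (which exhibits $(f/t,t)$ as the corner submatrix of $[g,f]$ obtained by deleting row $0$ and column $0$), the hereditary property of total positivity under passing to submatrices, and the equivalence of items (1) and (2) in Proposition~\ref{pro:Toeplitz_case}. Nothing is missing.
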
 

Clearly, the above necessary condition is absolutely not sufficient for total positivity of $[g,f]$.  
Primarily, if the terms of $g$ are not nonnegative, then $[g,f]$ is definitely not TP, even if $f$ is a P\'olya frequency sequence. Moreover, it should also be emphasized again that, although the fact that both $g$ and $f$ are P\'olya frequency sequences implies that the Riordan array $(g,f)$ is TP \cite{CW}, the analogous result for quasi-Riordan arrays does not hold as we have seen in Example \ref{ex:gf_not_TP}. 

\section{Sequence Characterization of the TP of Quasi-Riordan Arrays}
\label{Sec3}

Let us now bring the attention to the fact that the quasi-Riordan array $[g,f]$ can be treated as almost-Riordan array $\left(g|\frac{f(t)}{t},t\right)$ \cite{B}. From \cite{AK}, we obtain the generating functions of the $A$-, $Z$-, and $W$-sequences of the latter one as follows:

\begin{align}
&A(t)=1, \label{eq:A_Z_W-1}\\
&Z(t)=\frac{f(t)-z_0tg(t)}{f(t)}+z_0, \label{eq:A_Z_W-2}\\ 
&W(t)=\frac{(1-w_0t)g(t)-1}{f(t)}+w_0,\label{eq:A_Z_W-3}
\end{align}
where $z_0=f_1$ and $w_0=g_1$. For instance, for quasi-Riordan array $[1/(1-t), t/(1-t)]$, $A(t)=Z(t)=W(t)=1$. 

We call the following matrix the {\it production matrix} of the quasi-Riordan matrix $[g,f]$:

\begin{equation}\label{eq:2}
J=\left[ \begin{array}{llllll} w_0& z_0& & & &  \\
w_1& z_1& a_0& & & \\ w_2& z_2& a_1& a_0 & & \\
w_3& z_3& a_2& a_1& a_0 &\\
\vdots &\vdots &\vdots &\vdots& \vdots &\ddots
\end{array}\right]
\end{equation}

\begin{proposition}\label{pro:3.0}
Let $[g,f]$ be a quasi-Riordan array, and let $J$ be the matrix defined by \eqref{eq:2}. Then, 

\begin{equation}\label{eq:3}
[g,f]J=\overline{[g,f]},
\end{equation}
where $\overline{[g,f]}$ is the matrix by deleting the first row of $[g,f]$.
\end{proposition}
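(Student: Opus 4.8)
The plan is to establish \eqref{eq:3} by comparing, column by column, the generating functions of $[g,f]J$ and of $\overline{[g,f]}$. Write $M=[g,f]$ and let $c_k(t)$ denote the generating function of the $k$-th column of $M$, so that by \eqref{eq:1} we have $c_0(t)=g(t)$ and $c_k(t)=t^{k-1}f(t)$ for $k\geq 1$. Deleting the first row of $M$ sends each column generating function $c_k(t)$ to $(c_k(t)-c_k(0))/t$; since $g(0)=1$ and $c_k(0)=0$ for $k\geq 1$, the columns of $\overline{[g,f]}$ carry the generating functions $(g(t)-1)/t$, then $f(t)/t$, and then $t^{k-2}f(t)$ for $k\geq 2$. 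These are the targets to be matched.

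First I would record the simplification forced by \eqref{eq:A_Z_W-1}: since $A(t)=1$ we have $a_0=1$ and $a_i=0$ for $i\geq 1$, so for each $k\geq 2$ the $k$-th column of $J$ is the unit vector with a single $1$ in row $k-1$. Hence the $k$-th column of $MJ$ is exactly the $(k-1)$-st column of $M$, with generating function $t^{k-2}f(t)$, which already agrees with $\overline{[g,f]}$ for all $k\geq 2$. This dispatches the entire array apart from its first two columns.

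The remaining two columns carry the actual computation. Because column $0$ of $J$ is $(w_0,w_1,w_2,\dots)^{T}$ and $M$ is lower triangular (so each entry $(MJ)_{n,0}=\sum_j M_{n,j}w_j$ is a finite sum), the generating function of column $0$ of $MJ$ is $\sum_{j\geq 0}w_j c_j(t)=w_0 g(t)+f(t)\cdot\frac{W(t)-w_0}{t}$, where $W(t)=\sum_{j\geq 0}w_j t^j$. Substituting \eqref{eq:A_Z_W-3} gives $f(t)\cdot\frac{W(t)-w_0}{t}=\frac{(1-w_0t)g(t)-1}{t}$, and the standalone $w_0 g(t)$ cancels the $-w_0 t g(t)/t=-w_0 g(t)$ coming from expanding the numerator, leaving exactly $(g(t)-1)/t$. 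In the same way column $1$ of $MJ$ has generating function $z_0 g(t)+f(t)\cdot\frac{Z(t)-z_0}{t}$, and substituting \eqref{eq:A_Z_W-2} together with the analogous cancellation of $z_0 g(t)$ reduces it to $f(t)/t$. Both targets are met.

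Having matched columns $0$, $1$, and all $k\geq 2$, I would conclude that $MJ$ and $\overline{[g,f]}$ have identical columns, which is precisely \eqref{eq:3}. I expect the only genuine work to lie in the two algebraic reductions above, which is where the explicit forms of the $Z$- and $W$-series from \eqref{eq:A_Z_W-2} and \eqref{eq:A_Z_W-3} enter; the columns $k\geq 2$ are immediate once $A(t)=1$ is used, and the only bookkeeping needed is to treat columns $0$ and $1$ separately, since the first two columns of $J$ are dense rather than unit vectors.
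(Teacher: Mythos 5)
Your proposal is correct and follows the same route the paper takes: the paper's proof is just the one-line remark that \eqref{eq:3} ``can be checked straightforwardly by using matrix multiplication and definition of quasi-Riordan array,'' and your column-by-column computation is exactly that check, carried out in full (columns $k\geq 2$ via $A(t)=1$, columns $0$ and $1$ via the explicit forms \eqref{eq:A_Z_W-2} and \eqref{eq:A_Z_W-3}). You supply the details the paper omits, and the two algebraic reductions you flag are the only nontrivial steps.
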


\begin{proof}
Equation \eqref{eq:3} can be checked straightforwardly by using matrix multiplication and definition of quasi-Riordan array.
\end{proof}

\begin{theorem}\label{pro:general}
Let $J$ be a matrix defined by \eqref{eq:2} and let $R$ be the quasi-Riordan array defined by $A$-, $Z$-, and $W$-sequences. If $J$ is TP, then so is $R$. 
\end{theorem}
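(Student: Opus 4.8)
The plan is to reduce the total positivity of the infinite matrix $R=[g,f]$ to that of its finite sections and then induct, using the production relation \eqref{eq:3} to factor each section into two TP pieces. (I prefer this to a direct lattice-path / Lindstr\"om--Gessel--Viennot model, because the production matrix $J$ is lower-Hessenberg -- it carries one super-diagonal of $a_0$'s -- so the naive planar network fails to be planar.) Since every minor of $R$ involves only finitely many rows and columns, $R$ is TP if and only if each leading principal submatrix $R_n$ (the truncation $[g,f]_n$) is TP; it therefore suffices to prove that $R_n$ is TP for all $n$, which I would establish by induction on $n$, the base case $n=1$ being the $1\times1$ matrix $(g_0)=(1)$.

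For the inductive step I would read a factorization directly off $[g,f]J=\overline{[g,f]}$. Writing $r_{m,j}$ for the entries of $R$ and using that $R$ is lower triangular, for $0\le m\le n-1$ and $0\le k\le n$ the entry $r_{m+1,k}=\sum_{j}r_{m,j}J_{j,k}$ is a finite sum with $j\le m\le n-1$; collecting these identities shows that the block formed by rows $1,\dots,n$ of $R_{n+1}$ equals $R_n\,\widetilde J$, where $\widetilde J$ is the $n\times(n+1)$ submatrix of $J$ in rows $0,\dots,n-1$ and columns $0,\dots,n$. As the top row of $R_{n+1}$ is $e_0=(1,0,\dots,0)$, these assemble into
\[
R_{n+1}=\bigl([1]\oplus R_n\bigr)\,\widehat J_n,\qquad \widehat J_n:=\begin{pmatrix} e_0\\[2pt]\widetilde J\end{pmatrix},
\]
where $\widehat J_n$ is the $(n+1)\times(n+1)$ matrix obtained by placing the row $e_0$ on top of $\widetilde J$.

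It then remains to verify that both factors are TP and to apply the basic composition (Cauchy--Binet) formula, by which a product of TP matrices is TP. The factor $[1]\oplus R_n$ is TP by the inductive hypothesis, since adjoining a single positive entry in the top-left corner produces no negative minors: every minor of the direct sum either vanishes or factors as such a minor of $R_n$. For $\widehat J_n$ I would show that prepending $e_0$ to $\widetilde J$ preserves total positivity by cases on a chosen minor: if it omits the new top row it is a minor of $\widetilde J$, hence nonnegative because $\widetilde J$, being a submatrix of the TP matrix $J$, is itself TP; if it uses the top row but not the first column, the selected entries of that row are all zero and the minor vanishes; and if it uses both the top row and the first column, cofactor expansion along $e_0$ reduces it, with sign $+1$, to a minor of $\widetilde J$, again nonnegative. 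Hence $\widehat J_n$ is TP, the product is TP, and the induction closes.

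The main obstacle I anticipate is careful bookkeeping rather than any deep new idea: one must truncate the production relation so that the lower-Hessenberg shape of $J$ is respected and exactly $R_n$ (not a larger block) appears on the right, and one must check the determinantal signs in the row-prepending step. It is precisely this last step that converts the hypothesis that $J$ is TP into the total positivity of every finite section $R_n$, and hence of $R$.
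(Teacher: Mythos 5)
Your proof is correct and follows essentially the same route as the paper, whose own argument for Theorem \ref{pro:general} is only the one-line sketch ``use \eqref{eq:3} and mathematical induction.'' Your factorization $R_{n+1}=([1]\oplus R_n)\,\widehat J_n$, the Cauchy--Binet step, and the check that prepending $e_0$ to a submatrix of $J$ preserves total positivity are exactly the details that sketch leaves implicit (and mirror the proof of Theorem~2.1 in \cite{CLW}), so nothing further is needed.
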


\begin{proof}
It can be proved by using \eqref{eq:3} and mathematical induction.
\end{proof}

\begin{corollary}\label{pro:2.1}
Let $[g,f]$ and $J$ be two matrices defined by \eqref{2.6} and \eqref{eq:2}, respectively. If $J$ is TP, then 
so are $[g,f]$ and $(g,f)$. 
\end{corollary}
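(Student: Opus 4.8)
The plan is to chain two implications that are already available in the excerpt: the first is exactly Theorem \ref{pro:general}, and the second is the fact recorded (citing \cite{MMW}) immediately after equation \eqref{eq:1}. The only thing I need to check is that the matrices appearing in those two places are the ones named in the present statement.

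First I would note that the quasi-Riordan array called $R$ in Theorem \ref{pro:general} is precisely $[g,f]$: it is the array determined by the $A$-, $Z$-, and $W$-sequences, whose generating functions \eqref{eq:A_Z_W-1}--\eqref{eq:A_Z_W-3} are expressed in terms of the very $g$ and $f$ that define $[g,f]$. Hence the hypothesis that $J$ is TP, fed into Theorem \ref{pro:general}, immediately yields that $[g,f]$ is TP; this is the first of the two asserted conclusions. For the second, I would observe that the matrix $Q$ of \eqref{eq:1}, written columnwise as $(g,f,tf,t^2f,\ldots)$, coincides with $[g,f]$ by Definition \ref{def:2.3}. The implication noted below \eqref{eq:1}, that TP of $Q$ forces TP of the Riordan array $(g,f)$, then upgrades the TP of $[g,f]$ just obtained into the TP of $(g,f)$, completing the argument.

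If one prefers the second implication reproved rather than cited, the key tool is the factorization of Corollary \ref{cor:2.5}, $(g,f)_n=[g,f]_n\bigl([1]\oplus(g,f)_{n-1}\bigr)$, combined with induction on $n$. Assuming $(g,f)_{n-1}$ is TP, the block matrix $[1]\oplus(g,f)_{n-1}$ is TP as well (each of its minors either vanishes or reduces to a minor of $(g,f)_{n-1}$), while $[g,f]_n$ is TP as a leading principal submatrix of the TP matrix $[g,f]$; by the Cauchy--Binet formula every minor of the product is a sum of products of minors of the two factors, hence nonnegative, so $(g,f)_n$ is TP for all $n$. Since both conclusions follow by assembling results already established, I do not expect a genuine obstacle here; the one point needing a line of care is the double identification of $[g,f]$ with the $R$ of Theorem \ref{pro:general} and with the $Q$ of \eqref{eq:1}, after which the corollary is immediate.
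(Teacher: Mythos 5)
Your proposal is correct and follows essentially the same route as the paper: the paper's own (one-line) proof invokes Theorem \ref{pro:general} to get that $[g,f]$ is TP and then the factorization of Theorem \ref{thm:2.4} (equivalently, its truncated form in Corollary \ref{cor:2.5} together with induction and Cauchy--Binet, exactly as you spell out) to transfer total positivity to $(g,f)$. Your identification of the $R$ of Theorem \ref{pro:general} and the $Q$ of \eqref{eq:1} with $[g,f]$ is the right (and only) point of care, and your optional reproof of the second implication is precisely the argument the paper sketches after \eqref{eq:1}.
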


\begin{proof}
The corollary can be proved by using Theorems \ref{thm:2.4} and \ref{pro:general}.
\end{proof}

Clearly, by (\ref{eq:A_Z_W-1})-\eqref{eq:A_Z_W-3} the matrix $J$ shown above is of a special form. Namely

\begin{equation}
\label{eq:J_for_[g,f]}
J=\left[ 
\begin{array}{lllllll}
w_0&z_0&&&&&\\
w_1&z_1&1&&&&\\
w_2&z_2&0&1&&&\\
w_3&z_3&0&0&1&&\\
w_4&z_4&0&0&0&1&\\
\vdots&\vdots&&&&&\ddots\\
\end{array}
\right].
\end{equation}

\noindent
Then, straight from the definition of total positivity one derives:

\begin{theorem}\label{prop:minors}
The matrix $J$ given by $(\ref{eq:J_for_[g,f]})$ is TP if and only if the two followings hold:
\begin{enumerate}
\item $w_k=z_k=0$ for all $k\geqslant 2$ and $w_0, w_1, z_0, z_1\geq 0$
\item $w_0z_1-w_1z_0\geqslant 0$.
\end{enumerate}
\end{theorem}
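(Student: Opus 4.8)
The plan is to argue both implications directly from the definition of total positivity, exploiting the sparse shape of $J$ in \eqref{eq:J_for_[g,f]}: for every column index $j\geq 2$ the $j$-th column is the standard basis vector $e_{j-1}$ (a single $1$ in row $j-1$), while only the two leftmost columns carry the unknowns $w_k,z_k$. This structural observation is what makes every minor computable.

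For necessity, I would first note that nonnegativity of all $1\times 1$ minors forces $w_k\geq 0$ and $z_k\geq 0$ for every $k$. To kill the higher coefficients, for each fixed $k\geq 2$ I would evaluate the $2\times 2$ minor on rows $\{1,k\}$ and columns $\{0,2\}$, which equals $\det\begin{pmatrix} w_1 & 1\\ w_k & 0\end{pmatrix}=-w_k$; total positivity forces $-w_k\geq 0$, hence $w_k=0$. The analogous minor on rows $\{1,k\}$ and columns $\{1,2\}$ gives $-z_k\geq 0$, so $z_k=0$. Finally the leading $\{0,1\}\times\{0,1\}$ minor is exactly $w_0z_1-w_1z_0$, which yields condition (2). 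This disposes of necessity.

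For sufficiency, assume (1) and (2). Then every row $r\geq 2$ of $J$ has its unique nonzero entry (a $1$) in column $r+1$, and rows $0,1$ are supported only on columns $0,1,2$. Given arbitrary index sets of rows $R$ and columns $C$ with $|R|=|C|=p$, I would split $R=R_{\mathrm{low}}\sqcup R_{\mathrm{high}}$ with $R_{\mathrm{low}}=R\cap\{0,1\}$, and perform the generalized Laplace expansion of $\det J[R,C]$ along the rows $R_{\mathrm{high}}$. Each such row is a standard basis vector, so the only block that survives is $J[R_{\mathrm{high}},R_{\mathrm{high}}+1]$, which, because $r\mapsto r+1$ is increasing, is an identity matrix of determinant $1$; hence $\det J[R,C]=(-1)^{\epsilon}\det J[R_{\mathrm{low}},C']$, where $C'=C\setminus(R_{\mathrm{high}}+1)$ and $\epsilon$ is the usual Laplace sign exponent $\sum_{r\in R_{\mathrm{high}}}\mathrm{pos}_R(r)+\sum_{c\in R_{\mathrm{high}}+1}\mathrm{pos}_C(c)$. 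The reduced factor is at most a $2\times 2$ minor on rows $\{0,1\}$, and a short case check shows it is always one of $1$, $0$, $w_0,w_1,z_0,z_1$, or $w_0z_1-w_1z_0$, all nonnegative under (1)--(2).

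The step I expect to be the crux is controlling the sign $(-1)^{\epsilon}$, since for badly placed $R,C$ the exponent $\epsilon$ can genuinely be odd. The resolution I would push through is that $\epsilon$ is only ever odd when the reduced minor already vanishes. Indeed, $\det J[R_{\mathrm{low}},C']$ can be nonzero only if $C'\subseteq\{0,1,2\}$, because rows $0,1$ have no support beyond column $2$; in that case every forced column in $R_{\mathrm{high}}+1\subseteq\{3,4,\dots\}$ lies to the right of all of $C'$, while every row of $R_{\mathrm{high}}\subseteq\{2,3,\dots\}$ lies below all of $R_{\mathrm{low}}$. Consequently $R_{\mathrm{high}}$ and $R_{\mathrm{high}}+1$ occupy exactly the trailing positions $|R_{\mathrm{low}}|+1,\dots,p$ inside $R$ and $C$ respectively, so $\epsilon=2\sum_{i=|R_{\mathrm{low}}|+1}^{p} i$ is even. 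Thus whenever the reduced minor is nonzero the sign is $+1$, and otherwise the whole determinant is $0$; either way $\det J[R,C]\geq 0$, which shows $J$ is TP and completes the argument.
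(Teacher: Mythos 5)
Your proof is correct, and the necessity half is essentially identical to the paper's: the same $1\times1$ minors give nonnegativity, the same $2\times2$ minors on rows $\{1,k\}$ against columns $\{0,2\}$ and $\{1,2\}$ force $w_k=z_k=0$ for $k\geq 2$, and the leading $2\times2$ minor gives $w_0z_1-w_1z_0\geq 0$. For sufficiency both arguments amount to an exhaustive check of all minors, but you package it differently: the paper enumerates cases according to which of the rows $0,1$ and columns $0,1$ a minor contains, whereas you perform one generalized Laplace expansion along the rows in $R\cap\{2,3,\dots\}$, each of which is a standard basis vector, reducing every minor to a sign times an at most $2\times2$ minor of the top-left block. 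Your route has the advantage of forcing an explicit confrontation with the Laplace sign $(-1)^{\epsilon}$, and your resolution --- that the reduced minor is nonzero only when $C'\subseteq\{0,1,2\}$, in which case $R_{\mathrm{high}}$ and $R_{\mathrm{high}}+1$ both occupy the trailing positions and $\epsilon$ is even --- is sound; the paper's case analysis never introduces signs and is in places looser (e.g.\ it does not explicitly dispose of minors that use columns $0$ or $1$ together with rows $\geq 2$ before the reduction). One small point you leave implicit: if some $r\in R_{\mathrm{high}}$ has $r+1\notin C$, then that row of the submatrix is zero and the minor vanishes outright; this is consistent with your phrasing but worth stating. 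Either way the conclusion $\det J[R,C]\geq 0$ holds, so the argument is complete.
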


\begin{proof}
A matrix is TP if and only if its all minors are nonnegative.

Since minors of order $1$ must be nonnegative, we must have $w_k,z_k\geqslant 0$. 

If a minor does not consist neither the $0$-th nor the $1$-st row, then it is a minor of an identity matrix with an extra $0$ row on the top which is TP.

Now notice that choosing a minor consisting of the first and $k$-th  ($k\geqslant 2$) row and $0$-th and second column we get 
\[
\left|\begin{array}{ll}
w_1 & 1 \\
w_k & 0 \\
\end{array}\right|=-w_k\geqslant 0,
\] 
so comparing with the previous observation $w_k=0$ for all $k\geqslant 2$. 

Analogously for $z_k$.

Thus $J$ takes now the form 

\[
\left[ 
\begin{array}{lllllll}
w_0&z_0&&&&&\\
w_1&z_1&1&&&&\\
0&0&0&1&&&\\
0&0&0&0&1&&\\
0&0&0&0&0&1&\\
\vdots&\vdots&&&&&\ddots\\
\end{array}
\right]
\]

Consider the minors containing the $0$-th column and some columns $j_2$, $\ldots$, $j_n$, where $1<j_2<\cdots<j_n$. 
\begin{enumerate}
\item If a minor does not contain any of the rows $0$ or $1$, then its first column (coming after the $0$th one) is equal to $0$ and so is our minor. 
\item If a minor contains the $0$-th row, then it is triangular and equal either to $w_0$ or $0$. 
\item If a minor does not contain the $0$-th row, then exapanding it along this row we get that it is equal either to $0$ or $w_1$. 
\end{enumerate}

For the minors containing the first column and some columns $j_2$, $\ldots$, $j_n$, where $1<j_2<\cdots<j_n$, the conclusions are the same.

Consider now minors containing both columns $0$ and $1$. 
\begin{enumerate}
\item If a only the $0$-th row or the first row, then the first two columns are linearly dependent, so the minor is equal to $0$.
\item Suppose that a minor contains the $0$-th and the first row. If its order is $2$, then it is simply equal to 
$w_0z_1-w_1z_0$. Otherwise, it is a minor of a partitioned matrix of the form
\[
\left[\begin{array}{l|l}
A & B \\
\hline 
0 & C \\
\end{array}\right],
\quad\textrm{with } 
A=\left[\begin{array}{ll}w_0&z_0\\w_1&z_1\\\end{array}\right]
\]
and the first column of $C$ is zero, so the whole minor is equal to $0$ as well.
\end{enumerate}
\end{proof}

Joining the results of Theorems \ref{pro:general}, \ref{prop:minors} and $(\ref{eq:A_Z_W-1})$-$(\ref{eq:A_Z_W-3})$ one can derive the following.

\begin{corollary}
\label{cor:gf_TP}
Suppose $w_0,w_1,z_0,z_1\geqslant 0$ and $w_0z_1-w_1z_0\geqslant 0$. 
Then the quasi-Riordan array 
\[
\left[\frac{1-z_1t}{(w_0z_1-w_1z_0)t^2-(w_0+z_1)t+1},\frac{z_0t}{(w_0z_1-w_1z_0)t^2-(w_0+z_1)t+1}\right]
\] 
is TP. 
\end{corollary}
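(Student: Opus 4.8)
The plan is to recognize the displayed quasi-Riordan array as precisely the one whose production matrix $J$ is totally positive, and then to apply Theorem \ref{pro:general}. First I would take the most general $Z$- and $W$-sequences permitted by Theorem \ref{prop:minors}, namely those with $z_k=w_k=0$ for all $k\geqslant 2$, so that $Z(t)=z_0+z_1t$ and $W(t)=w_0+w_1t$. Under the hypotheses $w_0,w_1,z_0,z_1\geqslant 0$ and $w_0z_1-w_1z_0\geqslant 0$, Theorem \ref{prop:minors} guarantees that the corresponding matrix $J$ of the form $(\ref{eq:J_for_[g,f]})$ is TP, and then Theorem \ref{pro:general} immediately yields that the quasi-Riordan array $[g,f]$ determined by these $A$-, $Z$-, $W$-sequences is TP.

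What remains is to make $[g,f]$ explicit, i.e.\ to invert the relations $(\ref{eq:A_Z_W-2})$ and $(\ref{eq:A_Z_W-3})$ for the chosen $Z$ and $W$. Substituting $Z(t)=z_0+z_1t$ into $(\ref{eq:A_Z_W-2})$ and simplifying, I would obtain $f(t)(1-z_1t)=z_0tg(t)$, hence $f(t)=z_0tg(t)/(1-z_1t)$. Substituting $W(t)=w_0+w_1t$ into $(\ref{eq:A_Z_W-3})$ gives $w_1tf(t)=(1-w_0t)g(t)-1$; inserting the expression for $f$ and clearing the denominator $(1-z_1t)$ then produces a linear equation for $g(t)$ whose solution is
\[
g(t)=\frac{1-z_1t}{(w_0z_1-w_1z_0)t^2-(w_0+z_1)t+1}.
\]
Back-substituting into $f=z_0tg/(1-z_1t)$ cancels the factor $(1-z_1t)$ and produces the asserted $f(t)=z_0t/\bigl[(w_0z_1-w_1z_0)t^2-(w_0+z_1)t+1\bigr]$, completing the identification.

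Because the argument reduces to this explicit elimination, I do not expect a deep obstacle; the only points requiring care are the bookkeeping in the substitution and a check that the recovered pair really defines a proper quasi-Riordan array. For the latter I would verify $g(0)=1$, which is immediate from the closed form, and $f\in\F_1$, which holds as soon as $z_0>0$ so that $f$ genuinely has order one. Finally I would confirm consistency with the normalizations $z_0=f_1$ and $w_0=g_1$ recorded after $(\ref{eq:A_Z_W-3})$: expanding the closed forms to first order gives $f(t)=z_0t+O(t^2)$ and $g(t)=1+w_0t+O(t^2)$, so indeed $f_1=z_0$ and $g_1=w_0$, and the data are self-consistent.
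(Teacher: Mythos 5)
Your proposal is correct and follows essentially the same route as the paper: solve the linear system obtained from \eqref{eq:A_Z_W-2} and \eqref{eq:A_Z_W-3} with $Z(t)=z_0+z_1t$, $W(t)=w_0+w_1t$ to recover the stated $g$ and $f$, then invoke Theorem \ref{prop:minors} for the total positivity of $J$ and Theorem \ref{pro:general} to conclude. Your additional consistency checks ($g(0)=1$, $f\in\F_1$, $f_1=z_0$, $g_1=w_0$) are a harmless refinement the paper omits.
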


\begin{proof}
Substituting $A(t)=1$, $Z(t)=z_0+z_1t$ and $W(t)=w_0+w_1t$ into $(\ref{eq:A_Z_W-2})$ and \eqref{eq:A_Z_W-3} yields the system

\[
\begin{cases}
z_0tg(t)+(z_1t-1)f(t)=0\\
(w_0t-1)g(t)+w_1tf(t)=-1\\
\end{cases}
\]
which has the solution 
\[
\begin{array}{l}
g(t)=\frac{1-z_1t}{(w_0z_1-w_1z_0)t^2-(w_0+z_1)t+1},\\
f(t)=\frac{z_0t}{(w_0z_1-w_1z_0)t^2-(w_0+z_1)t+1}.\\
\end{array}
\]
By Theorem \ref{prop:minors} $J$ is TP. Thus, by Proposition \ref{pro:general} so is the obtained quasi-Riordan array.  
\end{proof}

\begin{example}
By Corollary \ref{cor:gf_TP} the quasi-Riordan array (with $w_0=z_0=1$, $w_1=2$, $z_1=3$):
\[
\left[\frac{1-3t}{t^2-4t+1},\frac{t}{t^2-4t+1}\right]
=\left[\begin{array}{llllll}
1&&&&&\\
1&1&&&&\\
3&4&1&&&\\
11&15&4&1&&\\
41&56&15&4&1&\\
\vdots&&&&&\ddots\\
\end{array}\right]
\]
is TP.
\end{example}

Note that since the discriminant of the denominator $D(t)$ of the obtained $g$ and $f$ is equal to 
\[
\Delta=w_0^2+z_1^2-2w_0z_1+4w_1z_0=(w_0-z_1)^2+4w_1z_0,
\]
all the coefficients are nonnegative, so $D(t)$ has two real roots. 
Moreover, according to the assumption of $w_0z_1-w_1z_0\geqslant 0$, their sum $\frac{w_0+z_1}{w_0z_1-w_1z_0}$ and their product $\frac 1{w_0z_1-w_1z_0}$ are nonnegative. Then, they must be nonnegative as well. 
Thus, $f$ is of form \eqref{0}, so by \cite{AESW, ASW, Kar} this confirms that $f$ must be P\'{o}lya frequency formal power series. If $z_1=0$, then by $w_0z_1-w_1z_0\geq 0$ we have $w_1z_0=0$, which implies $w_1=0$ due to $z_0\not= 0$. Hence, if $z_1=0$, then $z_0> 0$, $w_1=0$, and $w_0> 0$, and all those make $[g,f]=\left[ \frac{1}{1-w_0t}, \frac{z_0t}{1-w_0t}\right]$, where both $f$ and $g$ are P\'olya frequency. If  $z_1>0$, then $g$ is not a P\'{o}lya frequency formal power series, although both $[g,f]$ and, hence, $(g,f)$ are TP. This gives another counterexample for Item 2 in \cite{Slo}. As what we have seen, if  $z_1>0$, then $g$ is not a P\'{o}lya frequency formal power series-- its numerator has a positive root.  However, as Proposition \ref{pro:general} holds, $[g(t),f(t)]$ is TP, so the coefficients of the series $g(t)$ are nonnegative. 
Moreover, we have already seen in Example \ref{ex:gf_not_TP} that, in general, fact that both series $g(t)$ and $f(t)$ are P\'olya frequency  is not sufficient for $[g,f]$ to be TP, so one could mistrust the necessity of this condition as well.

\section{Counterexamples}\label{Sec4}

Note that in Example \ref{ex:gf_not_TP} we have already presented a non-TP quasi-Riordan array $[g,f]$, where $g$ and $f$ are P\'olya frequency power series. Here, in the first subsection, we give some more examples like that.  
In the second subsection, we give some more examples for TP of quasi-Riordan arrays $[g,f]$ with non-P\'olya frequency power series $g$.

\subsection{Quasi-Riordan arrays that are not TP}\label{Subsec4.1}

\begin{proposition}
\label{pro:alpha}
Let $k_1,k_2$ be natural numbers such that $k_2>k_1$, $\alpha>0$, and let $f(t)=\sum_{k\geqslant 1}f_kt^k$ be a P\'{o}lya frequency power series. If for some positive natural number $n$ it holds $\alpha^{k_2-k_1}f_{k_1-n+1}>f_{k_2-n+1}$, then the quasi-Riordan array $\left[\frac 1{1-\alpha t},f(t)\right]$ is not TP, although both series $g$ and $f$ are P\'{o}lya frequency power series.
\end{proposition}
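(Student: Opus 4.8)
The plan is to exhibit an explicit negative minor of $\left[\frac{1}{1-\alpha t}, f(t)\right]$ under the stated hypothesis. Since $g(t)=\frac{1}{1-\alpha t}=\sum_{j\geq 0}\alpha^j t^j$ has column $0$ entries $g_j=\alpha^j$, and since the columns $k\geq 1$ of $[g,f]$ are generated by $t^{k-1}f(t)$, the matrix entries are $d_{j,0}=\alpha^j$ and $d_{j,k}=f_{j-k+1}$ for $k\geq 1$. Both $g$ and $f$ are manifestly P\'olya frequency (the former by \eqref{0} with a single $\beta$-factor, the latter by hypothesis), so the interesting content is only the failure of total positivity. Accordingly, I would fix the indices suggested by the hypothesis and display a $2\times 2$ minor whose value is a negative multiple of $\alpha^{k_2-k_1}f_{k_1-n+1}-f_{k_2-n+1}$.

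Concretely, I would select the two columns $0$ and $k_2-k_1+1$, and two rows chosen so that the $2\times 2$ submatrix reads
\[
\begin{pmatrix} \alpha^{\,r} & f_{r-(k_2-k_1)} \\[2pt] \alpha^{\,s} & f_{s-(k_2-k_1)} \end{pmatrix},
\]
with the row indices $r<s$ calibrated against $n$, $k_1$, $k_2$ so that the off-diagonal and diagonal $f$-entries become exactly $f_{k_1-n+1}$ and $f_{k_2-n+1}$. The natural choice is to take the upper row to have its $g$-entry equal to $\alpha^{k_2-k_1}$ (so the column-$0$ ratio between the two rows is $\alpha^{k_2-k_1}$) and to arrange the column-$(k_2-k_1+1)$ entries to be $f_{k_1-n+1}$ on the diagonal and $f_{k_2-n+1}$ off it. Expanding the determinant then yields, up to a positive factor $\alpha^{\,m}$ coming from the common power of $\alpha$ factored out of column $0$,
\[
\alpha^{\,m}\bigl(f_{k_2-n+1}-\alpha^{k_2-k_1}f_{k_1-n+1}\bigr),
\]
which is strictly negative precisely under the hypothesis $\alpha^{k_2-k_1}f_{k_1-n+1}>f_{k_2-n+1}$. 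This shows $\left[\frac{1}{1-\alpha t},f\right]$ has a negative minor and hence is not TP.

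The main obstacle will be the bookkeeping of indices: one must verify that the requested row indices $r,s$ are nonnegative and lie in the lower-triangular support of $[g,f]$, so that the chosen entries $f_{k_1-n+1}$ and $f_{k_2-n+1}$ actually appear where claimed (recall that $d_{j,k}=f_{j-k+1}$ is zero when $j-k+1\leq 0$, and the column-$0$ entries are $\alpha^j$). I would therefore first record the general entry formula $d_{j,0}=\alpha^j$, $d_{j,k}=f_{j-k+1}$ for $k\geq 1$, then solve the two linear conditions on $(r,s)$ that force the desired $f$-indices, and finally check positivity of the resulting indices using $k_2>k_1$ and $n\geq 1$. Once the alignment is confirmed, the determinant computation itself is immediate, and a brief remark that the hypothesis is nonvacuous (it can be met by a convergent P\'olya frequency $f$ whose coefficients decay faster than $\alpha^{-k}$) rounds out the statement.
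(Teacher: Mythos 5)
Your proposal is correct and is essentially the paper's own argument: both exhibit a single negative $2\times 2$ minor built from column $0$ (entries $\alpha^j$) and one shifted-$f$ column, whose determinant is a positive power of $\alpha$ times $f_{k_2-n+1}-\alpha^{k_2-k_1}f_{k_1-n+1}$. The paper simply takes rows $k_1,k_2$ and columns $0,n$ directly, giving $M^{k_1,k_2}_{0,n}=\alpha^{k_1}f_{k_2-n+1}-\alpha^{k_2}f_{k_1-n+1}<0$, which avoids the index-solving step you defer; your choice of rows $k_2-n+1$ and $2k_2-k_1-n+1$ with column $k_2-k_1+1$ works out to the same quantity up to the factor $\alpha^{k_2-n+1}$.
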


\begin{proof}
Consider a minor of $\left[\frac 1{1-\alpha t},f(t)\right]$ that consists of the entries from the rows $k_1$, $k_2$ and the columns $0$ and $n$. It is equal to 
\[
M_{0,n}^{k_1,k_2}=\alpha^{k_1}f_{k_2-n+1}-\alpha^{k_2}f_{k_1-n+1},
\]
so by the assumption it is negative, and so $\left[\frac 1{1-\alpha t},f(t)\right]$ is not TP. 
\end{proof}

\begin{example}
Let 
\[
f(t)=\frac t{(1-2t)^2}=t+4t^2+12t^3+32t^4+80t^5+192t^6+\cdots
\]
Choosing $k_1=3$, $k_2=4$, $n=1$, for $\alpha=3$ we have 
\[
\alpha^{4-3}f_{3}>f_{4}.
\]
Hence, 
\[
\left[\frac 1{1-3t},\frac t{(1-2t)^2}\right]=\left[
\begin{array}{llllll}
1&&&&&\\
3&1&&&&\\
9&4&1&&&\\
27&12&4&1&&\\
81&32&12&4&1&\\
\vdots&&&&&\ddots\\
\end{array}
\right]
\]
is not TP, as 
\[
M^{0,1}_{3,4}=\left|\begin{array}{ll}
27 & 12 \\
81 & 32 \\
\end{array}\right|=-108<0, 
\]
although $g$ and $f$ are P\'{o}lya frequency power series.
\end{example}

In general, Proposition \ref{pro:alpha} implies the following.

\begin{corollary}
For every P\'{o}lya frequency power series $f$, there exists a constant $\alpha>0$ such that the quasi-Riordan array $\left[\frac 1{1-\alpha t},f(t)\right]$ is not TP, although both $\frac 1{1-\alpha t}$ and $f(t)$ are P\'{o}lya frequency power series.
\end{corollary}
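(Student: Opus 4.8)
The plan is to reduce the corollary to Proposition~\ref{pro:alpha} by verifying that its quantitative hypothesis can always be met once we are free to choose $\alpha$. Fix any P\'olya frequency power series $f(t)=\sum_{k\geq 1}f_kt^k\in\F_1$, so that $f_1>0$ and all $f_k\geq 0$. The goal is to produce indices $k_1<k_2$, a positive integer $n$, and a constant $\alpha>0$ with $\alpha^{k_2-k_1}f_{k_1-n+1}>f_{k_2-n+1}$; Proposition~\ref{pro:alpha} then delivers the conclusion verbatim, including the observation that $1/(1-\alpha t)$ is itself P\'olya frequency (its generating function is of the form \eqref{0} with a single $\beta_0=\alpha$).

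The simplest route is to set $n=1$, so the inequality becomes $\alpha^{k_2-k_1}f_{k_1}>f_{k_2}$. First I would take $k_1=1$, which is safe because $f_1>0$ guarantees a strictly positive left-hand factor. Then the requirement is $\alpha^{k_2-1}f_1>f_{k_2}$ for some $k_2\geq 2$. If $f$ has any nonzero coefficient beyond $f_1$, pick the smallest such $k_2$; since $f_1>0$ and $\alpha^{k_2-1}\to\infty$ as $\alpha\to\infty$, choosing $\alpha$ large enough makes the strict inequality hold, and we are done. The only remaining situation is the degenerate case $f(t)=f_1t$, i.e. $f_k=0$ for all $k\geq 2$; here $f_{k_2}=0$ while $\alpha^{k_2-1}f_1>0$ for every $\alpha>0$, so the inequality holds trivially for, say, $k_2=2$ and any $\alpha>0$. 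Thus in all cases a suitable $\alpha$ exists.

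I do not expect a genuine obstacle, since the corollary is an existential weakening of Proposition~\ref{pro:alpha}: the real content already lives in the proposition, and the corollary only asserts that its hypothesis is non-vacuous over all P\'olya frequency $f$. The one point requiring a moment's care is the edge case $f=f_1t$, which I would flag explicitly so the reader sees that the claim holds even when $f$ is a monomial. A secondary cosmetic choice is whether to phrase the argument with $n=1$ (cleanest) or to retain general $n$; I would use $n=1$ and note that any larger $n$ with $f_{k_2-n+1}=0$ would serve equally well. Writing the proof, I would state the chosen parameters, invoke the limit $\alpha^{k_2-1}\to\infty$, and then cite Proposition~\ref{pro:alpha} to close.
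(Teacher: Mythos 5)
Your proposal is correct and follows essentially the same route as the paper: reduce to Proposition \ref{pro:alpha} and take $\alpha$ large enough that $\alpha^{k_2-k_1}f_{k_1-n+1}>f_{k_2-n+1}$. In fact you are somewhat more careful than the paper's one-line proof, which fixes the indices ``arbitrarily'' and writes $\alpha>\sqrt[k_2-k_1]{f_{k_2-n}/f_{k_1-n}}$ without guarding against a vanishing coefficient in the denominator; your choice $n=1$, $k_1=1$ (so the relevant coefficient is $f_1>0$) and your explicit treatment of the monomial case $f=f_1t$ close those small gaps.
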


\begin{proof}
Clearly, fixing arbitrarily indices $k_1,k_2,n$, $k_1<k_2$, $n>0$, it suffices to take 
\begin{equation}
\label{eq:alpha} 
\alpha>\sqrt[k_2-k_1]{\frac{f_{k_2-n}}{f_{k_1-n}}}.
\end{equation}  
\end{proof}

\begin{example}
Let 
\[
f(t)=\frac{t(t+1)}{1-2t}=t+\sum_{n=2}^\infty\left(2^{n-2}+2^{n-1}\right)t^n.
\]
To find $\alpha$ appearing in Proposition \ref{pro:alpha} we consider the inequality 
\[
\alpha>\sqrt[k_2-k_1]{\frac{f_{k_1-n}}{f_{k_2-n}}}.
\]
Taking, for instance $k_2=n+2$, $k_1=n+1$, we get 

\[
\alpha>\frac{f_2}{f_1}=3.
\]
Thus, for any $\alpha>3$ the quasi-Riordan $\left[\frac 1{1-\alpha t},f(t)\right]$ is not TP.
\end{example}

Series $g$, that in from Proposition \ref{pro:alpha} is equal to $\frac 1{1-\alpha t}$, can be generalized to various other forms. A formula for this general form seems difficult to be found, yet let us present one more example.

\begin{example}
\label{ex:alpha_beta}
Let $f(t)$ be a P\'{o}lya frequency power series and let 
\[
g(t)=\frac{1}{(1-\alpha t)(1-\beta t)}=\sum_{n=0}^\infty\frac{\beta^{n+1}-\alpha^{n+1}}{\beta-\alpha}t^n\quad\textrm{with }\beta>\alpha,
\]
i.e. $g(t)$ is also a P\'{o}lya frequency power series. 
\par 
Looking at the minor 
\[
M^{1,2}_{0,1}=\left|\begin{array}{ll}
\frac{\beta^2-\alpha^2}{\beta-\alpha} & f_0 \\
\frac{\beta^3-\alpha^3}{\beta-\alpha} & f_1 \\
\end{array}\right|
\]
it can be observed that it is negative if and only if 
\[
\frac{\alpha^2+\alpha\beta+\beta^2}{\alpha+\beta}>\frac{f_1}{f_0}
\quad\Leftrightarrow\quad 
\alpha^2+\beta^2+\alpha\beta-\frac{f_1}{f_0}\alpha-\frac{f_1}{f_0}\beta>0.
\]
The above inequality has infinitely many solutions. One of the examples can be the pair $\alpha=\frac{f_1}{2f_0}$, $\beta=\frac{f_1}{f_0}$. 
More precisely, the set of all solutions is the exterior of the ellipse 
\[
\frac{\left(x-\frac{\sqrt 2f_1}{3f_0}\right)^2}{\frac{2f_1^2}{9f_0^2}}+\frac{y^2}{\frac{2f_1^2}{3f_0^2}}=1
\]
rotated by the angle $-\frac{\pi}4$. The example of this area for $\frac{f_1}{f_0}=2$ is shown on Fig.\ref{fig:ellipse}. 
\end{example}

\begin{figure}[p!]
\caption{The area depicting pairs $(\alpha,\beta)$ for which $\left[\frac 1{(1-\alpha t)(1-\beta t)},f(t)\right]$ is not TP, $\frac{f_1}{f_0}=2$. Note that the fact that point $(\alpha,\beta)$ lies inside the ellipse does not necessarily mean that $[g,f]$ is TP.}
\label{fig:ellipse}
\begin{center}
\includegraphics[scale=0.4]{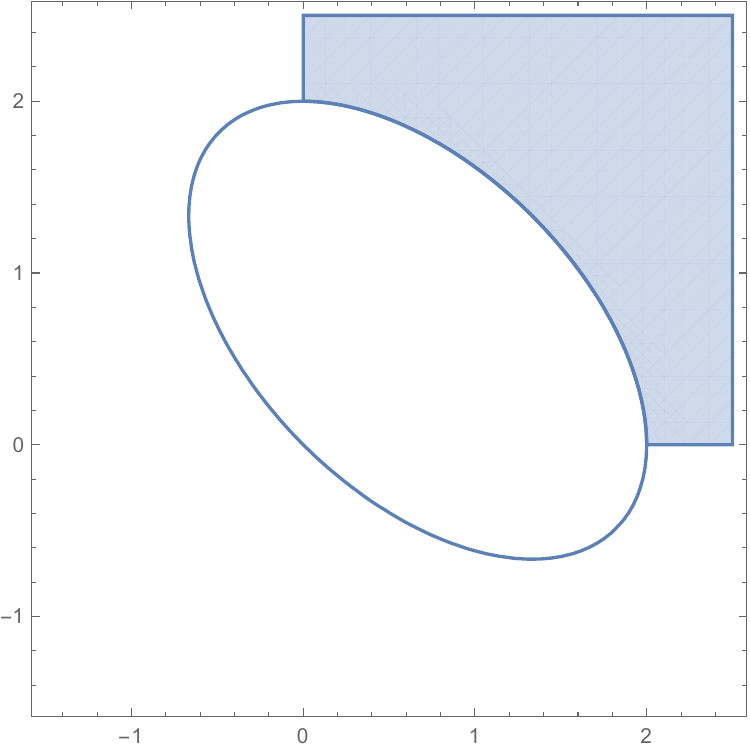}
\end{center}
\end{figure}

\subsection{Quasi-Riordan arrays that are TP}\label{Subsec4.2}

\begin{example}
\label{ex:g_not}
Let $f(t)=\frac t{1-2t}$, $g(t)=1+t+t^2$. Then $f$ is a P\'{o}lya frequency power series, $g$ is not. 
Consider 
\[
R=[g,f]=\left[\begin{array}{llllll}
1&&&&&\\
1&1&&&&\\
1&2&1&&&\\
0&4&2&1&&\\
0&8&4&2&1&\\
\vdots&&&&&\ddots\\
\end{array}\right]
\]
and its minors. 

Clearly, those minors that do not involve the $0$th column are the minors of $\left[1,f(t)\right]$, so they are nonnegative. Moreover, only those that are either of order $1$ or those that are triangular minors are positive, the other ones are equal to $0$. 

Consider now a minor of order $n$ ($n\geqslant 2$) containing the $0$th column. If it contains the $0$th row, then expanding along the $0$th row, we get that it is equal to some minor of $\left[1,f(t)\right]$, so it is nonnegative. Suppose that it does not not contain the $0$th row. Then let us expand it along the $0$th column. If it contains one of the $1$st row and the $2$ row, expanding along the $0$th column, we again get a minor of $\left[1,f(t)\right]$. Suppose that it contains both $1$st and $2$nd row. Again, expanding along the $0$th column, we get then the difference of two minors of $\left[1,f(t)\right]$. If $n=2$, then we deal with the minor 
\[
\left|\begin{array}{ll}
1 & r_{k\:1} \\
1 & r_{k\:2} \\
\end{array}\right|\in 
\left\{
\left|\begin{array}{ll}
1 & 1 \\
1 & 2 \\
\end{array}\right|,\; 
\left|\begin{array}{ll}
1 & 0 \\
1 & 1 \\
\end{array}\right|,\;
\left|\begin{array}{ll}
1 & 0 \\
1 & 0 \\
\end{array}\right|
\right\}
\]
that is nonnegative. 

If $n\geqslant 3$, both minors are of order greater or equal to $3$, so they are equal to $0$, and so is their difference.  Hence, $R$ is TP. 
\end{example}

Example \ref{ex:g_not} can be extended to the following.

\begin{corollary}
\label{cor:g_not}
Let $\alpha>0$, and let $g_0,g_1,g_2>0$ be such that $g_1^2-4g_0g_2<0$. 
If $g_1\alpha-g_2\geqslant 0$, then $R=\left[g_0+g_1t+g_2t^2,\frac t{1-\alpha t}\right]$ is TP, although $g_0+g_1t+g_2t^2$ is not a P\'{o}lya frequency power series. 
\end{corollary}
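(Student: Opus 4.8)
The plan is to extend the minor-by-minor argument of Example \ref{ex:g_not} to a general quadratic numerator. Write $f(t)/t=1/(1-\alpha t)$, a P\'olya frequency series, so that by Proposition \ref{pro:Toeplitz_case} the Appell Riordan array $(f/t,t)=(1/(1-\alpha t),t)$ is TP; call it $L$, with entries $L_{i,j}=\alpha^{i-j}$ for $i\ge j$ and $0$ otherwise. By the block form \eqref{2.8}, the columns $j\ge 1$ of $R=[g,f]$ coincide with $L$, the $0$th row of $R$ is $(g_0,0,0,\dots)$, and the $0$th column is $(g_0,g_1,g_2,0,0,\dots)^T$. First I would dispose of the minors that do not involve column $0$: these are minors of the TP matrix $L$, hence nonnegative. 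This reduces everything to minors that use column $0$.

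For a minor using column $0$ I would split on whether row $0$ is present. If it is, expanding along row $0$ (which is supported only on column $0$) leaves $g_0$ times a minor of $L$, so the value is nonnegative. If row $0$ is absent, I expand along column $0$, whose only surviving entries sit in rows $1$ and $2$ (values $g_1,g_2$). If neither row $1$ nor row $2$ occurs, the minor is zero; if exactly one of them occurs it is the top row, the single surviving cofactor carries a $+$ sign, and the minor equals $g_m$ times a minor of $L$, hence is nonnegative. The order-two base case on rows $\{1,2\}$, columns $\{0,1\}$ equals $g_1\alpha-g_2$, and this is where the hypothesis $g_1\alpha-g_2\ge 0$ is meant to enter.

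The decisive case, and the step I expect to be the main obstacle, is a minor that contains column $0$ and both rows $1$ and $2$ but not row $0$. Expanding along column $0$ yields the signed combination $g_1M_1-g_2M_2$, where $M_1,M_2$ are the minors of $L$ obtained by deleting row $1$, respectively row $2$, from the remaining rows (all other rows being $\ge 3$), over a common column set. Both $M_1,M_2$ are nonnegative, so the whole difficulty lies in the subtraction. The natural tool is the geometric structure of $L$: since $L_{i,1}=\alpha L_{i,2}$ for every $i\ge 2$, on rows $\ge 2$ column $1$ is proportional to column $2$, which makes $M_1$ and $M_2$ comparable and lets the combination collapse (when column $1$ is absent, $M_2=0$; when both columns $1,2$ are present, a cofactor cancellation annihilates $M_1$).

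Because this collapse is exactly where the argument stands or falls, I would verify the smallest instance first, namely the minor on rows $\{1,2,3\}$ and columns $\{0,1,2\}$:
\[
\det\begin{pmatrix} g_1 & 1 & 0\\ g_2 & \alpha & 1\\ 0 & \alpha^2 & \alpha\end{pmatrix}
=g_1(\alpha^2-\alpha^2)-g_2(\alpha-0)=-g_2\alpha .
\]
With $g_2,\alpha>0$ this is strictly negative, so the geometric cancellation kills the entire $g_1$-contribution and leaves only $-g_2$ times a positive minor of $L$. This computation is therefore the crux that the proof must resolve, and the point at which the stated hypotheses must be reconciled before any general inductive bookkeeping is attempted, since the sign here is precisely what governs whether $\left[g_0+g_1t+g_2t^2,\,t/(1-\alpha t)\right]$ can be totally positive.
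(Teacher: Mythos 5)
Your computation is correct, and it settles the matter: the corollary as stated is false, and your ``crux'' minor is a genuine counterexample rather than a gap to be closed. On rows $\{1,2,3\}$ and columns $\{0,1,2\}$ the array $R=\left[g_0+g_1t+g_2t^2,\,t/(1-\alpha t)\right]$ has the minor
\[
M^{1,2,3}_{0,1,2}=\det\begin{pmatrix} g_1 & 1 & 0\\ g_2 & \alpha & 1\\ 0 & \alpha^2 & \alpha\end{pmatrix}=-g_2\alpha,
\]
and the hypotheses force $g_2>0$ (since $g_1^2-4g_0g_2<0$) and $\alpha>0$, so this minor is strictly negative for every admissible choice of parameters; no reconciliation with $g_1\alpha-g_2\geqslant 0$ is possible. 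The same minor in Example \ref{ex:g_not} (rows $\{1,2,3\}$, columns $\{0,1,2\}$ of the displayed matrix) equals $-2$, so that example, of which the corollary is the generalization, fails as well.

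For comparison, the paper's proof simply asserts that, ``analogously as in the example,'' the only nonzero non-triangular minor is the $2\times 2$ minor $g_1\alpha-g_2$. The flaw sits in the step of Example \ref{ex:g_not} claiming that when a minor of order $n\geqslant 3$ contains column $0$ and both rows $1$ and $2$ but not row $0$, the two cofactor minors in the expansion $g_1A-g_2B$ along column $0$ both vanish. Your proportionality observation explains why $A$ (which omits row $1$) vanishes when columns $1$ and $2$ are both present, but $B$ (which omits row $2$) retains row $1$, where the proportionality $R_{i,1}=\alpha R_{i,2}$ breaks down because $R_{1,1}=1$ while $R_{1,2}=0$; indeed $B$ is in general a strictly positive staircase minor of the TP Toeplitz block (e.g.\ $B=\alpha$ on rows $\{1,3\}$, columns $\{1,2\}$), leaving the surviving term $-g_2B<0$. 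So there is nothing to repair in your argument; it is the statement that must be withdrawn or substantially restricted, and since the offending minor is negative whenever $g_2,\alpha>0$, no additional condition on $g_0,g_1$ alone can rescue it.
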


\begin{proof}
Analogously as in the example, the only minors that are nonzero, are the triangular ones and the minor 
\[
\left|\begin{array}{ll}
r_{10} & r_{11} \\
r_{20} & r_{21} \\
\end{array}
\right|=\left|\begin{array}{ll}
g_1 & 1 \\
g_2 & \alpha \\
\end{array}\right|=g_1\alpha-g_2
\]
which, by assumption, is nonnegative.
\end{proof}

Note that replacing $g(t)=1+t+t^2$ from Example \ref{ex:g_not} by, for instance, $g(t)=t^2+1$ (that does not satisfy the assumption $g_1>0$ of Corollary \ref{cor:g_not}) yields the array of the form 
\[
\left[\begin{array}{lllll}
1&&&&\\
0&f_1&&&\\
1&f_2&f_1&&\\
0&f_3&f_2&f_1&\\
\vdots&&&&\ddots\\
\end{array}\right]
\]
whose minor $M^{1,2}_{0,1}$ is equal to $-f_1<0$, so the above array is not TP.

\section{Closing comments}

In the present work we have studied total positivity of quasi-Riordan arrays. As we have mentioned, these are, in fact, some particular cases of almost-Riordan arrays. Thus, it is natural question whether the presented results can be generalized for almost-Riordan arrays. Considering the capacity of the paper, the TP of almost-Riordan arrays will be discussed in a subsequent paper. For the same reason, some other related topics, such as analogues of $TPr$ Riordan arrays in \cite{CLW}, $TP_r$ quasi-Riordan arrays and $TPr$ almost-Riordan arrays, will also be discussed in a subsequent paper.

\section{Data availability statement}

All data generated or analyzed during this study are included in this article.

\end{document}